\newtheorem{theorem}{Theorem}[section]
\newtheorem{corollary}[theorem]{Corollary}
\theoremstyle{definition}
\newtheorem{definition}[theorem]{Definition}
\newtheorem{example}[theorem]{Example}
\theoremstyle{remark}
\newtheorem{remark}[theorem]{Remark}
\numberwithin{equation}{section}
\newcommand{\N}{\ensuremath{\mathbb{N}}}   
\newcommand{\R}{\ensuremath{\mathbb{R}}}   
\newcommand{\norm}[1]{\left\lVert #1\right\rVert}   
\newcommand{\Hol}{\ensuremath{\mathrm{Hol}}}   
\newcommand{\GL}{\ensuremath{\mathrm{GL}}}   
\newcommand{\SO}{\ensuremath{\mathrm{SO}}}   
\newcommand{\U}{\ensuremath{\mathrm{U}}}   
\newcommand{\SU}{\ensuremath{\mathrm{SU}}}   
\newcommand{\Sp}{\ensuremath{\mathrm{Sp}}}   
\newcommand{\End}{\ensuremath{\mathrm{End}}}   
\newcommand{\Nabla}[1][]{\ensuremath{\nabla^{#1}}}   
\newcommand{\sTens}{\ensuremath{\mathrm{S}}}   
\newcommand{\Loop}{\ensuremath{\Omega_{\textnormal{p-}C^1}}}   
\newcommand{\CSC}{\ensuremath{\mathrm{CSC}}}   
\newcommand{\Met}{\ensuremath{\mathrm{Met}}}   
\begin{document}

\title{Monotonicity of Holonomy Groups}


\author[L. G{\"o}tzfried]{Linus G{\"o}tzfried}
\address{L. G{\"o}tzfried, Universit{\"a}t Regensburg, 93053 Regensburg, Germany}
\email{linus.goetzfried@gmx.de}
\thanks{The author would like to thank Bernd Ammann for valuable supervision and comments.}

\subjclass[2020]{Primary 53C29; Secondary 53C05;}

\date{\today}


\begin{abstract}
We prove the following monotonicity result for the holonomy group: Given a sequence of metric connections converging in $C^0$ such that all its members have holonomy contained in a closed group $H$, also their limit connection needs to have holonomy contained in $H$. As a corollary, for a sequence of Riemannian metrics converging in $C^1$ and having special restricted holonomy, their limit metric must also have special restricted holonomy. In particular, this implies that the map assigning to Riemannian metrics on a manifold the conjugacy classes of their restricted holonomy groups is lower semicontinuous with respect to the order relation given by inclusion of representatives.
\end{abstract}

\maketitle

\section{Introduction}

An important notion in differential geometry is the one of \emph{holonomy}. Given a connection on a principal or vector bundle, one can define the holonomy group of the connection as the group of endomorphisms of the fibres given by parallel translation along piecewise-differentiable based loops. Of particular interest are the holonomy groups of the Levi-Civita connection on a Riemannian manifold, which are subgroups of the special orthogonal group. In many cases, a ``special'' holonomy of a Riemannian manifold implies interesting consequences for the resulting metric. For example, a smooth $4m$-dimensional Riemannian manifold with holonomy contained in the group $\Sp(m)\Sp(1)$ is automatically an Einstein manifold (see e.\thinspace g.\ \cite{besse_einstein_2008}, Theorem~14.39).

In this work, we study the dependence of the holonomy group on the connection. We focus on the case of metric connections on a vector bundle and prove the following monotonicity result:
\begin{theorem}
\label{thm:HolonomyLimit}
Let $M$ be a connected manifold of class $C^2$, let $x\in M$ and $E\to M$ be an orientable real $C^1$-vector bundle with standard fibre $\R^l$. Let $g\in C^1(M,\sTens^2E^*)$ be a bundle metric on $E$ and $\Nabla[g]$ be a $C^0$-connection on $E$ that is compatible with $g$. Moreover, for $k\in\N$, let $g_k\in C^1(M,\sTens^2 E^*)$ be a bundle metric on $E$ and $\Nabla[k]$ be a $C^0$-connection compatible with $g_k$, such that $g_k|_x\to g|_x$ in $\sTens^2 E^*_x$ and $\Nabla[k]\to\Nabla[g]$ in $C^0$.

Let $H\subseteq\SO(l)$ be closed. Assume that for all $k\in\N$, there exists a positively oriented $g_k$-orthonormal basis of $E_x$ such that with respect to this basis, we have $\Hol_x(\Nabla[k])\subseteq H$. Then, there exists a positively oriented $g$-orthonormal basis of $E_x$ such that with respect to this basis, we have $\Hol_x(\Nabla[g])\subseteq H$.
\end{theorem}

Here, $\sTens^2 E^*$ is the symmetric tensor product of $E^*$ with itself and $C^0$-convergence of connections is defined via the compact-open topology on $C^0(T^* M\otimes E\otimes E^*)$, see Section \ref{sec:ProofTheorem1.1}. Thus, it is shown that in the case of $C^0$-convergence of metric connections, the holonomy can only become more special and not more generic.

In particular, this finds an application for Riemannian holonomy groups. Given a manifold $M$, denote by $\Met(M)$ be the set of all $C^1$-Riemannian metrics on $M$, equipped with the compact-open $C^1$-topology. We obtain:
\begin{corollary}
\label{cor:HolonomyLimit}
Let $(M,g)$ be an oriented $n$-dimensional Riemannian manifold of class $C^2$, let $g\in\Met(M)$ and for $k\in\N$, let $g_k\in\Met(M)$. Let $H\subseteq\SO(n)$ be a closed subgroup and let $x\in M$. Assume that $g_k\to g$ in $\Met(M)$ and for all $k\in\N$, we have $\Hol_x(g_k)\subseteq H$ up to conjugation in $\SO(n)$. Then, $\Hol_x(g)\subseteq H$ up to conjugation in $\SO(n)$.
\end{corollary}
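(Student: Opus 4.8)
The plan is to deduce the corollary from Theorem~\ref{thm:HolonomyLimit}, applied with the vector bundle $E=TM$ (orientable, with standard fibre $\R^n$, since $M$ is oriented), the bundle metrics being the Riemannian metrics themselves, and the connections being the Levi-Civita connections $\Nabla[g]$ and $\Nabla[g_k]$ (the latter in the role of $\Nabla[k]$). Replacing $M$ by the connected component of $x$ changes neither the hypotheses nor $\Hol_x$, so we may assume $M$ connected. The phrase ``$\Hol_x(h)\subseteq H$ up to conjugation in $\SO(n)$'' means precisely that there is a positively oriented $h$-orthonormal basis of $T_xM$ with respect to which $\Hol_x(h)$ becomes a subgroup of $H$, since any two such bases differ by an element of $\SO(n)$; thus the hypothesis on the $g_k$ is exactly the hypothesis of Theorem~\ref{thm:HolonomyLimit}, and its conclusion is exactly the assertion ``$\Hol_x(g)\subseteq H$ up to conjugation in $\SO(n)$.'' So everything reduces to verifying the analytic hypotheses of the theorem.

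First I would check the regularity. In any chart the Christoffel symbols of a metric $h$ are $\Gamma^m_{ij}(h)=\frac{1}{2}h^{mk}(\partial_i h_{jk}+\partial_j h_{ik}-\partial_k h_{ij})$. Since $g\in\Met(M)$ is locally Lipschitz-$C^1$, the functions $\partial_l g_{ij}$ are locally Lipschitz continuous; and as $h\mapsto h^{-1}$ is smooth on the open cone of positive definite symmetric matrices, the coefficients $g^{mk}$ are locally Lipschitz continuous as well. Sums and products of locally Lipschitz continuous functions being locally Lipschitz continuous, the $\Gamma^m_{ij}(g)$ are locally Lipschitz continuous, which is the required regularity of $\Nabla[g]$; the same applies verbatim to each $\Nabla[g_k]$. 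Moreover $g_k\to g$ in $C^1$, hence in $C^0$, so in particular $g_k|_x\to g|_x$ in $\sTens^2 T^*_xM$, which is the second easy hypothesis.

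The remaining point, and the only one requiring a genuine argument, is that $\Nabla[g_k]-\Nabla[g]\to 0$ in the compact-open $C^0$-topology. I would argue chart by chart over a compact exhaustion: fix a chart domain and a compact $K$ inside it. Since $g$ is continuous and positive definite it is uniformly bounded below on $K$, so the $C^0$-convergence $g_k\to g$ forces the $g_k$ to be uniformly positive definite on $K$ for large $k$; on a compact neighbourhood of the relevant part of the positive definite cone, matrix inversion is uniformly continuous, whence $g_k^{-1}\to g^{-1}$ uniformly on $K$. Together with the uniform convergence $\partial_l(g_k)_{ij}\to\partial_l g_{ij}$ on $K$ (the $C^1$-convergence) and uniform boundedness of all the quantities involved, this gives $\Gamma^m_{ij}(g_k)\to\Gamma^m_{ij}(g)$ uniformly on $K$. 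As the difference of two connections is a tensor whose local coefficients are exactly the differences of Christoffel symbols, this is precisely $C^0$-convergence $\Nabla[g_k]-\Nabla[g]\to 0$ on $\Gamma(T^*M\otimes E^*\otimes E)$. With all hypotheses of Theorem~\ref{thm:HolonomyLimit} verified, the corollary follows. The main (though routine) obstacle is the handling of the inverse metric: one must fix the compact set first and only then invoke $C^0$-convergence, so that the local uniform lower bound on $g$ propagates to the $g_k$ and makes $g_k^{-1}\to g^{-1}$ uniform on compacta.
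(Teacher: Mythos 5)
Your proposal is correct and follows exactly the route the paper intends: the paper proves this corollary only by the one-line remark preceding it, that $C^1$-convergence of the metrics automatically gives $C^0$-convergence of the Levi-Civita connections and the required local Lipschitz regularity, so that Theorem~\ref{thm:HolonomyLimit} applies with $E=TM$. You have merely written out the Christoffel-symbol details (and the harmless reduction to the connected component of $x$) that the paper leaves implicit.
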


These statements hold true for the full holonomy group and a fortiori also for the restricted one. However, for the latter, we can show more. Let $\CSC(\SO(n))$ be the set of closed subgroups of $\SO(n)$ modulo the conjugation equivalence relation; it will be shown in Section \ref{sec:Application} that this is a partially ordered set with respect to inclusion of representatives. Then, with respect to this order relation, the above monotonicity result implies the following:
\begin{theorem}
\label{thm:HolonomyLowerSemicont}
Let $M$ be an oriented $n$-dimensional manifold of class $C^2$ and let $x\in M$. The map
\[\Met(M)\to \CSC(\SO(n)),\qquad g\mapsto[\Hol_x^0(g)],\]
is lower semicontinuous.
\end{theorem}
An appropriate definition of lower semicontinuity of a map from a topological space into a partially ordered set is given in Section \ref{sec:Application}.

The motivation and some of the ideas of this paper are taken from the recent preprint \cite{muller_restricted_2024} by M{\"u}ller. However, the proof of the main theorem is different from the cited preprint, and the resulting theorem is stronger (note that \cite{muller_restricted_2024} requires $C^2$-regularity of the involved metrics).

\section{Situation and proof of Theorem~\ref{thm:HolonomyLimit}}
\label{sec:ProofTheorem1.1}

We recall the definition of the holonomy group.
\begin{definition}
Let $M$ be a connected manifold of class $C^2$, let $x\in M$ and let $E\to M$ be an orientable real $C^1$-vector bundle over $M$ with standard fibre $\R^l$, $l\in\N$.
\begin{enumerate}
\item We denote by $\Loop(x)$ the set of all piecewise-differentiable loops\\ $c:[0,1]\to M$ based at $x$.
\item Let $\Nabla$ be a connection on $E$ and $c\in\Loop(x)$. We define the \emph{parallel transport} $P_c^{\Nabla}$ along $c$ (with respect to $\Nabla$) as follows: Given any $e\in E_x$, there exists a unique section $s$ of $E|_{c([0,1])}$ with $\Nabla_{\dot c(t)}s(t)=0$ for all $t\in[0,1]$, and we let $P_c^{\Nabla}(e):=s(1)$. Then

\begin{minipage}{0.9\textwidth}
\[\Hol_x(\Nabla):=\left\{P_c^{\Nabla}\,\middle|\,c\in\Loop(x)\right\}\]
\end{minipage}
forms a subgroup of $\GL(E_x)$, the \emph{holonomy group} of $\Nabla$ (at $x$).
\item Given a connection $\Nabla$ on $E$, we define its \emph{restricted holonomy group} at $x$,

\begin{minipage}{0.9\textwidth}
\begin{align*}
\Hol_x^0(\Nabla):=&\,\left\{P_c^{\Nabla}\,\middle|\,c\in\Loop(x)\textnormal{ such that $c$ is contractible}\right\}\\\subseteq&\,\Hol_x(\Nabla).
\end{align*}
\end{minipage}
Both $\Hol_x(\Nabla)$ and $\Hol_x^0(\Nabla)$ are immersed Lie subgroups of $\GL(E_x)$. It can be shown (see e.\thinspace g.\ \cite{ambrose_theorem_1953}, Section~2, Lemma~4) that $\Hol_x^0(\Nabla)$ is the connected component of the identity in $\Hol_x(\Nabla)$.
\item Given two connections $\Nabla,\tilde\Nabla$ on $E$ with continuous coefficients, their difference $\Nabla-\tilde\Nabla$ is a continuous section of $T^*M\otimes E\otimes E^*$. We equip the space $C^0(M,T^*M\otimes E\otimes E^*)$ of such sections with the compact-open $C^0$-topology (see Appendix~\ref{app:CompactOpen}) and say that a sequence of connections $\Nabla[k]$ on $E$ converges in $C^0$ to a connection $\Nabla$ if $\Nabla[k]-\Nabla\to 0\in C^0(M,T^*M\otimes E^*\otimes E)$.
\item Let $h$ be a metric on $E$ and $\Nabla[h]$ be a $C^0$-connection on $E$ that is compatible with $h$, i.\thinspace e.\ $X( h(v,w))=h(\Nabla[h]_X v,w)+h(v,\Nabla[h]_X w)$ for all $X\in TM$, $v,w\in C^1(M,E)$. We denote by

\begin{minipage}{0.9\textwidth}
\begin{align*}
\SO_{l}(E_x)_h:=\{&A\in\End(E_x)\,|\,\det(A)=1,\\
&\forall v,w\in E_x: h|_x(Av,Aw)=h|_x(v,w)\}
\end{align*}
\end{minipage}
the special orthogonal group of $E_x$ with respect to the metric $h|_x$. Then, since $\Nabla[h]$ is metric, we have $P_c^{\Nabla[h]}\in\SO_{l}(E_x)_h$ for all $c\in\Loop(x)$.
\item Choosing any basis of $E_x$, we may identify $E_x$ with $\R^l$ and consequently view $\Hol_x(\Nabla)$ and $\Hol_x^0(\Nabla)$ for any $\Nabla$ as subgroups of $\GL(l)$. However, this is only up to conjugation; the choice of different bases yields subgroups conjugate in $\GL(l)$.
\item Let $g\in C^1(M,\sTens^2E^*)$ be the bundle metric on $E$ from the statement of Theorem~\ref{thm:HolonomyLimit}. We choose a $g$-orthonormal basis $(b_1,\dots,b_l)$ of $E_x$ and use it to identify $E_x$ with $\R^l$, $\SO_{l}(E_x)_g$ with $\SO(l)$ and consequently $\Hol_x(\Nabla[g])$ with a subgroup of $\SO(l)$.
\end{enumerate}
Now, given a (possibly different) metric $h\in C^1(M,\sTens^2 E^*)$, we may still view $\SO_{l}(E_x)_h$ as a distinguished subgroup of $\GL(l)$, as follows:
\begin{enumerate}
\label{def:NonstandardEmbedding}
\item[(8)] The basis $(b_1,\dots,b_l)$ allows us to associate to any bilinear form $Q$ on $E_x$ its representing matrix $M_{Q}\in\R^{l\times l}$; if $Q$ is non-degenerate, $M_Q$ is invertible. Given $h\in C^1(M,\sTens^2 E^*)$ with $M_{h|_x}\in B_1(M_{g|_x})$, we define

\begin{minipage}{0.9\textwidth}
\[\SO(l)_h:=\{A\in\GL(l)\,|\,A^TM_{h|_x}A=M_{h|_x}\}\]
\end{minipage}
(the $x$-dependency is understood). Let $W_h$ be a symmetric positive definite square root of $M_{h|_x}$ as defined by the power series of $\sqrt{1+c}$ in $B_1(0)$; then the map $\SO(l)\to\SO(l)_h,A\mapsto W_h^{-1}AW_h$ is an isomorphism. We call it the \emph{nonstandard embedding} $\phi_h:\SO(l)\to\SO(l)_h\subset\GL(l)$. For a sequence $(h_k)_{k\in\N}\subset C^1(M,\sTens^2E^*)$ with $M_{h_k|_x}\in B_1(M_{g|_x})$ for all $k$ and $M_{h_k|_x}\to M_{g|_x}$ in $\R^{l\times l}$, we have $W_{h_k}\to I_l$ (the $l\times l$-identity matrix).
\end{enumerate}
\end{definition}

Now, we can prove the first announced result, Theorem~\ref{thm:HolonomyLimit}.
\begin{proof}[Proof of Theorem~\ref{thm:HolonomyLimit}]
Let $(b_1,\dots,b_l)$ be the positively oriented $g$-orthonormal basis of $E_x$; we use it to identify $E_x$ with $\R^l$ and $H$ with a subgroup of $\SO(l)$ as before. Then, the conclusion of the theorem may be reformulated as follows: There exists $U\in\SO(l)$ such that $U\Hol_x(\Nabla[g])U^{-1}\subseteq H$.

We may assume that the representing matrices of $g_k|_x$ and $g|_x$ satisfy $M_{g_k|_x}\in B_1(M_{g|_x})$ for all $k\in\N$ by dropping a finite number of elements from the sequence, hence we may define the matrices $W_{g_k}$, the subgroups $\SO(l)_{g_k}$ and the nonstandard embedding $\phi_{g_k}$ for $k\in\N$ as in Definition \ref{def:NonstandardEmbedding}. Let us denote by $H_{g_k}$ the image of the composition of the inclusion $H\hookrightarrow\SO(l)$ and the nonstandard embedding $\SO(l)\to\SO(l)_{g_k}$.

The identification $E_x=\R^l$ translates the assumptions of the theorem into the assertion that for all $k\in\N$ there exists $U_k\in\SO(l)_{g_k}$ with $U_k\Hol_x(\Nabla[k])U_k^{-1}\subseteq H_{g_k}$. Now, the sequence $\phi_{g_k}^{-1}(U_k)$ is contained in the compact group $\SO(l)$ and hence possesses a convergent subsequence. Moreover, using the explicit definition of $\phi_{g_k}$, we see that if $(\phi_{g_{k_m}}^{-1}(U_{k_m}))_{m\in\N}$ converges to some $U\in\SO(l)$, then also $(U_{k_m})_{m\in\N}$ converges to $U$.

Therefore, passing to the subsequence without changing the notation for brevity, we may assume that $(U_k)_{k\in\N}$ converges to some $U\in\SO(l)$. It now only remains to prove the following: \newline

\emph{Claim.} We have $U\Hol_x(\Nabla[g])U^{-1}\subseteq H$.

\emph{Proof of the claim.} It is sufficient to show that for any $c\in\Loop(x)$, we have $P_c^{\Nabla[g]}\in H$. By definition of the compact-open topology, the restrictions of the connections to the (compact) image of $c$ converge in $C^0$. Therefore, the coefficients of the parallel transport equation defining $P_c^{\Nabla[k]}$ converge uniformly to the coefficients of the parallel transport equation defining $P_c^{\Nabla[g]}$. From well-known facts about ordinary differential equations it follows that after possibly again passing to a subsequence, we may assume that $\lim\limits_{k\to\infty}P_c^{\Nabla[k]}=P_c^{\Nabla[g]}$. (See e.\thinspace g.\ \cite{hartman_ordinary_2002}, Chapter~I, Theorem~2.4.) Thus, also
\[UP_c^{\Nabla[g]}U^{-1}=\lim\limits_{k\to\infty}U_kP_c^{\Nabla[k]}U_k^{-1}=\lim\limits_{k\to\infty}W_{g_k}U_kP_c^{\Nabla[k]}U_k^{-1}W_{g_k}^{-1}.\]
Since $U_kP_c^{\Nabla[k]}U_k^{-1}\in H_{g_k}$ respectively $W_{g_k}U_kP_c^{\Nabla[k]}U_k^{-1}W_{g_k}^{-1}\in H$ for all $k\in\N$, we obtain $UP_c^{\Nabla[g]} U^{-1}\in H$ by continuity of matrix multiplication. This proves the claim and hence the theorem.
\end{proof}

\begin{remark}
It shall be stressed that the proof of the claim above makes no assertion about the uniformity of the convergence $P_c^{\Nabla[k]}\to P_c^{\Nabla[g]}$ with respect to the loop $c$, but such one is also not required.
\end{remark}

\begin{remark}
The theorem also holds with the holonomy groups replaced by the restricted holonomy groups, with the same proof.
\end{remark}

\section{Application to Riemannian holonomy groups}
\label{sec:Application}
Theorem~\ref{thm:HolonomyLimit} may be applied to and refined in the case of oriented Riemannian manifolds and their Levi-Civita connections. 
\begin{definition}
Let $M$ be a manifold of class $C^2$. Let $\Met(M)$ be the set of all $C^1$-Riemannian metrics on $M$, equipped with the compact-open $C^1$-topology. Given $h\in\Met(M)$ with Levi-Civita connection $\Nabla[h]$ and $x\in M$, we denote $\Hol_x(h):=\Hol_x(\Nabla[h])$ and $\Hol_x^0(h):=\Hol_x^0(\Nabla[h])$.
\end{definition}

The proof of Corollary~\ref{cor:HolonomyLimit} is straightforward.
\begin{proof}[Proof of Corollary~\ref{cor:HolonomyLimit}]
By the Koszul formula, $C^1$-convergence $g_k\to g$ implies $C^0$-convergence $\Nabla[g_k]\to\Nabla[g]$. The corollary follows from Theorem \ref{thm:HolonomyLimit}.
\end{proof}

As a further corollary, we get:
\begin{corollary}
Let $M$ be an oriented $n$-dimensional Riemannian manifold of class $C^2$, let $H\subseteq\SO(n)$ be a closed subgroup and let $x\in M$. Then, the set
\begin{align*}
\{h\in \mathrm{Met}(M)\,|\,&\Hol_x(h)\subseteq H\textnormal{ up to conjugation}\}
\end{align*}
is closed in $\mathrm{Met}(M)$.
\end{corollary}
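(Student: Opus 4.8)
The plan is to deduce this statement directly from Corollary~\ref{cor:HolonomyLimit}, since ``closed'' simply means ``closed under taking limits of convergent sequences'' — and here one must be slightly careful, because $\mathrm{Met}(M)$ with the compact-open $C^1$-topology need not be first countable in general, so sequential closedness and closedness are a priori different notions. However, I would argue that the compact-open $C^1$-topology on $\mathrm{Met}(M)$ \emph{is} metrizable (for a second-countable, hence $\sigma$-compact, manifold $M$ one exhausts $M$ by countably many compact sets and takes a countable family of charts; the resulting topology is induced by a countable family of seminorms and is therefore metrizable), so that closedness and sequential closedness coincide. Alternatively, and more cleanly, I would simply invoke Corollary~\ref{cor:HolonomyLimit} at the level of nets or note that the argument there only uses that one can extract from the defining data a convergent subsequence in the compact group $\SO(n)$ — but the cleanest write-up just fixes $M$ second-countable and uses sequences.

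Concretely, the key steps are as follows. First, write $S := \{h\in\mathrm{Met}(M)\mid \Hol_x(h)\subseteq H\text{ up to conjugation in }\SO(n)\}$. Second, let $g$ be in the closure of $S$; since $\mathrm{Met}(M)$ is metrizable, there is a sequence $(g_k)_{k\in\N}\subseteq S$ with $g_k\to g$ in $\mathrm{Met}(M)$. Third, by definition of $S$ each $g_k$ satisfies $\Hol_x(g_k)\subseteq H$ up to conjugation in $\SO(n)$, so all the hypotheses of Corollary~\ref{cor:HolonomyLimit} are met (note the manifold in that corollary may be taken to be $(M,g)$ itself). Fourth, apply Corollary~\ref{cor:HolonomyLimit} to conclude $\Hol_x(g)\subseteq H$ up to conjugation in $\SO(n)$, i.e.\ $g\in S$. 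Hence $S$ contains all its limit points and is closed.

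The only real obstacle is the point-set subtlety about whether sequential closedness suffices; the mathematical content is entirely in Corollary~\ref{cor:HolonomyLimit}. I expect the write-up to consist of one short paragraph, mostly spent addressing metrizability of the compact-open $C^1$-topology (or otherwise observing that the proof of Theorem~\ref{thm:HolonomyLimit} goes through verbatim for nets, since a net in the compact group $\SO(l)$ has a convergent subnet and the ODE-continuity argument is insensitive to whether one indexes by $\N$ or by a directed set). I would favour the metrizability route since for the intended applications $M$ is second-countable, and then the corollary follows in two lines from Corollary~\ref{cor:HolonomyLimit}.
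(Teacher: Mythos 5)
Your proposal is correct and follows exactly the route the paper intends: the corollary is stated as an immediate consequence of Corollary~\ref{cor:HolonomyLimit} (the paper gives no separate proof), namely that the set is sequentially closed and that sequential closedness suffices for the compact-open $C^1$-topology. Your extra care about metrizability of $\Met(M)$ is a reasonable point that the paper leaves implicit, but it does not change the argument.
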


For the restricted holonomy group, the results can also be phrased in terms of lower semicontinuity of the map $\Met(M)\to\CSC(\SO(n))$, $g\mapsto[\Hol_x^0(g)]$ as stated before, see Theorem~\ref{thm:HolonomyLowerSemicont}. The proof will follow again as an application of Theorem~\ref{thm:HolonomyLimit}, once it has been shown that the statement is actually well-defined.

\begin{definition} 
\label{def:semicont}
$\ $
\begin{enumerate}
\item Let $X$ be a topological space and let $(Y,\leq)$ be a partially ordered set. We say that a function $f:X\to Y$ is \emph{lower semicontinuous at $x\in X$} if there exists a neighbourhood $U$ of $x$ such that $f(x)\leq f(x')$ for all $x'\in U$. We say that $f:X\to Y$ is \emph{lower semicontinuous}, if it is lower semicontinuous at all $x\in X$.
\item Let $G$ be a compact Lie group and let $\CSC(G)$ be the set of closed subgroups of $G$ modulo the conjugation equivalence relation

\begin{minipage}{0.9\textwidth}
\[A\sim B:\Leftrightarrow \exists_{g\in G}: gAg^{-1}=B.\]
\end{minipage}
Let ``$\leq$'' be the relation on $\CSC(G)$ defined by inclusion of representatives, that is

\begin{minipage}{0.9\textwidth}
\[[A]\leq[B]:\Leftrightarrow \exists_{A\in [A], B\in [B], g\in G}: gAg^{-1}\subseteq B.\]
\end{minipage}
\end{enumerate}
\end{definition}

The relation ``$\leq$'' on the set $\CSC(G)$ defined as above (for $G$ any compact Lie group) will be shown to define a partial order on $\CSC(G)$; hence we obtain a notion of lower semicontinuous functions to $\CSC(G)$. To prove antisymmetry, the following Cantor-Bernstein-like theorem is needed, which we learnt from \cite{muller_restricted_2024}.

\begin{theorem}
\label{thm:CantorBernsteinLieGroupConjugation}
$\ $
\begin{enumerate}
\item Let $K,H$ be Lie groups with finitely many connected components. Let $f:K\to H$ and $g:H\to K$ be injective Lie group homomorphisms. Then $f$ and $g$ are Lie group isomorphisms.
\item Let $K,H$ be closed Lie subgroups of a compact Lie group $G$. If there exist $g,g'\in G$ such that $gKg^{-1}\subseteq H$ and $g'H{g'}^{-1}\subseteq K$, then in fact $gKg^{-1}=H$ and $g'H{g'}^{-1}=K$.
\end{enumerate}
\end{theorem}
We include a proof of the theorem in order to make the article self-contained, moreover we give a slightly different argument for the case of multiple connected components compared to \cite{muller_restricted_2024}.
\begin{proof}
\emph{Ad 1.} Let $K,H$ be Lie groups with finitely many connected components. Let $f:K\to H$ and $g:H\to K$ be injective Lie group homomorphisms. We prove that $f$ is a Lie group isomorphism, the proof for $g$ is analogous.

We start out by proving: \newline

\emph{Claim.} The linear map $d_1 f:T_1 K\to T_1 H$ is an isomorphism.

\emph{Proof of the claim.} First, we show injectivity. It is well known that for the exponential maps $\exp_K$ and $\exp_H$ of $K$ resp. $H$ we have $\exp_H\circ d_1 f=f\circ\exp_K$. Furthermore, both exponential maps are local diffeomorphisms, in particular injective if restricted to a suitable neighbourhood of zero in the respective tangent spaces.

Now let $v\in T_1 K$ with $d_1 f(v)=0$. Let $U\subseteq T_1 K$ be a neighbourhood of zero such that $\exp_K|_U$ is a diffeomorphism onto its image. In particular, $(f\circ\exp_K)|_U$ is injective by assumption on $f$. Let furthermore $\lambda\in(0,\infty)$ be so small that $w:=\lambda v\in U$. By linearity we have $d_1 f(w)=0$. Thus
\[f(\exp_K(w))=\exp_H(d_1 f(w))=\exp_H(0)=1=f(\exp_K(0))\]
and therefore $w=0$ by injectivity of $(f\circ\exp_K)|_U$. Hence, also $v=\lambda^{-1}w=0$ and we conclude that $d_1 f$ is injective.

Now, analogously, one shows that $d_1 g:T_1 H\to T_1 K$ is injective. By dimension counting it follows that both maps must in fact be isomorphisms, proving the claim.\newline

Let $H_0,K_0$ be the connected components of $1\in H_0$ and $1\in K_0$, respectively. It is well known that $\exp_H$ surjects onto $H_0$; moreover the claim directly states that $d_1 f$ is surjective. Thus $f\circ\exp_K=\exp_H\circ d_1 f$ surjects onto $H_0$ and since $\mathrm{im}(\exp_K)=K_0$, one concludes that $H_0\subseteq f(K_0)$. Hence $f|_{K_0}:K_0\to H_0$ is an isomorphism. The map $f$ induces a map $\overline{f}:K/K_0\to H/H_0$ which fits into the following commutative diagram with exact rows:
\[\begin{tikzcd}
K_0 \arrow[r] \arrow[d, "f|_{K_0}"] & K \arrow[r] \arrow[d,"f"] & K/K_0 \arrow[r] \arrow[d,"\overline{f}"] & 1 \arrow[d] \\
H_0 \arrow[r] & H \arrow[r] & H/H_0 \arrow[r] & 1
\end{tikzcd}\]
(the horizontal maps are the obvious ones). In this diagram, the first vertical map is therefore surjective and the second and fourth vertical maps are injective. By one of the four-lemmas, the third vertical map is hence injective. Analogously, one proves that the map $\overline{g}:H/H_0\to K/K_0$ induced by $g$ is injective. Now $H/H_0$ and $K/K_0$ are in bijection with the sets of connected components of $H$ and $K$, thus they are finite. Therefore, $\overline{f}$ and $\overline{g}$ must be isomorphisms.

Thus, for every $h\in H$, there exist $k_1\in K$ and $h_0\in H_0$ such that $h=f(k_1)h_0$. On the other hand, due to surjectivity of $f|_{K_0}:K_0\to H_0$, there exists $k_0\in K_0$ with $h_0=f(k_0)$, and then $h=f(k_1)f(k_0)=f(k_1 k_0)$ lies in the image of $f$. One concludes that $f$ is surjective, hence bijective.

Now, to finish the proof, it suffices to recall the well-known statement that a bijective Lie group homomorphism is already a Lie group isomorphism. \newline

\emph{Ad 2.} Let $K,H$ be closed Lie subgroups of a compact Lie group $G$ and $g,g'\in G$ with $gKg^{-1}\subseteq H$ and ${g'}H{g'}^{-1}\subseteq K$.We can view $K,H$ as Lie groups for their own right; since they are closed in the compact set $G$, they are compact as well. In particular, they have only finitely many connected components. Now part 2 follows from part 1, applied to the injective Lie group homomorphisms $K\to H,k\mapsto gkg^{-1}$ and $H\to K,h\mapsto g'h{g'}^{-1}$.
\end{proof}

\begin{corollary}
Let $G$ be a compact Lie group. The relation ``$\leq$'' on $\CSC(G)$ from Definition~\ref{def:semicont} is a partial order on $\CSC(G)$.
\begin{proof}
Reflexivity of the relation ``$\leq$'' is clear. Transitivity follows from the fact that if there exist closed subgroups $A,B,B',C$ of $G$ and $g_1,g_2,g\in G$ with $g_1 A g_1^{-1}\subseteq B$, $gBg^{-1}=B'$ and $g_2 B' g_2^{-1}\subseteq C$, then $(g_2 g g_1) A (g_2 g g_1)^{-1}\subseteq C$. Finally, antisymmetry of the relation ``$\leq$'' follows directly from Theorem \ref{thm:CantorBernsteinLieGroupConjugation}, part~2.
\end{proof}
\end{corollary}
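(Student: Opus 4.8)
The plan is to check the three defining properties of a partial order --- reflexivity, transitivity and antisymmetry --- for $\leq$ on $\CSC(G)$. First I would record the harmless observation that, given fixed representatives $K$ of $[A]$ and $H$ of $[B]$, one has $[A]\leq[B]$ if and only if there exists $g\in G$ with $gKg^{-1}\subseteq H$; indeed, if this holds for one choice of representatives it holds for every choice, since conjugating $K$ or $H$ can be absorbed into the element $g$. In particular $\leq$ is a well-defined relation on $\CSC(G)$, and it suffices to argue with arbitrary but fixed representatives.

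Reflexivity is immediate, taking $g$ to be the identity: $eKe^{-1}=K\subseteq K$. For transitivity, suppose $[A]\leq[B]$ and $[B]\leq[C]$, and fix representatives $K,C_0$ and elements $g_1,g_2\in G$ together with representatives $H_1,H_2$ of $[B]$ such that $g_1Kg_1^{-1}\subseteq H_1$ and $g_2H_2g_2^{-1}\subseteq C_0$. Choosing $h\in G$ with $hH_1h^{-1}=H_2$ and setting $g:=g_2hg_1$, a one-line computation gives $gKg^{-1}=g_2h(g_1Kg_1^{-1})h^{-1}g_2^{-1}\subseteq g_2H_2g_2^{-1}\subseteq C_0$, hence $[A]\leq[C]$. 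This step is pure bookkeeping with conjugating elements.

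The only substantive point is antisymmetry, and it is exactly here that Theorem \ref{thm:CantorBernsteinLieGroupConjugation} enters. Assume $[A]\leq[B]$ and $[B]\leq[A]$, fix representatives $K\in[A]$, $H\in[B]$ and elements $g,g'\in G$ with $gKg^{-1}\subseteq H$ and $g'Hg'^{-1}\subseteq K$. Since $G$ is a compact Lie group and $K,H$ are closed subgroups of it, part 2 of Theorem \ref{thm:CantorBernsteinLieGroupConjugation} applies directly and yields $gKg^{-1}=H$; thus $K$ and $H$ are conjugate in $G$, i.e. $[A]=[B]$. I do not expect any real obstacle within this corollary itself: the genuine difficulty of antisymmetry has already been isolated in, and resolved by, Theorem \ref{thm:CantorBernsteinLieGroupConjugation} (which in turn reduces, via the exponential map and a four-lemma argument for the groups of connected components, to the classical Cantor-Bernstein theorem for finite-dimensional vector spaces and for finite sets). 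The remaining work --- reflexivity, transitivity, and the verification that the hypotheses of that theorem are met --- is routine.
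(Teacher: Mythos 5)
Your proposal is correct and follows essentially the same route as the paper: reflexivity via the identity element, transitivity via the same composition $g_2hg_1$ of conjugating elements, and antisymmetry by direct appeal to part 2 of Theorem \ref{thm:CantorBernsteinLieGroupConjugation}. The only addition is your explicit remark that the relation is independent of the choice of representatives, which the paper leaves implicit.
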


Now, we can prove Theorem~\ref{thm:HolonomyLowerSemicont}.
\begin{proof}[Proof of Theorem~\ref{thm:HolonomyLowerSemicont}]
It is well known (see e.\thinspace g.\ \cite{joyce_compact_2000}, Theorem~3.2.8) that for any $h\in\Met(M)$, the group $\Hol_x^0(h)$ is closed in $\SO(n)$. Hence the map from the statement of the theorem is actually well-defined. Now the statement follows from Theorem~\ref{thm:HolonomyLimit}.
\end{proof}

Theorem~\ref{thm:HolonomyLimit} and Theorem~\ref{thm:HolonomyLowerSemicont} are in a certain sense rigidity results: As already stated in the introduction, the holonomy can only become more special, not more generic. However, full continuity of the map from Theorem~\ref{thm:HolonomyLowerSemicont} is in general not true. For example, for Ricci-flat deformations of a Ricci-flat metric the conjugacy class of the holonomy group is known to be constant if the universal covering of $M$ admits a parallel spinor (see \cite{ammann_holonomy_2019}). In more general cases, one can easily construct explicit counterexamples.

\begin{example}
Consider subsets of scaled Poincaré disk models, $M:=B_{1/2}(0)\subset\R^2$ equipped with the family of metrics $g_k(x^1,x^2):=\frac{4}{(1-((x^1)^2+(x^2)^2)/k^2)^2}\,dx^1\otimes dx^2$ in cartesian coordinates. These converge in $\Met(M)$ to the limit metric $g(x^1,x^2):=4\,dx^1\otimes dx^2$.

For any $k\in\N$, $(M,g_k)$ is up to scaling describable as a subset of the hyperbolic space of dimension $2$. Hence, as is well known, $\Hol_x(g_k)=\Hol_x^0(g_k)\cong\SO(2)$ for all $x\in M$ (see e.\thinspace g.\ \cite{besse_einstein_2008}, Proposition~10.79 and Chapter~10.K, Table~3). However $\Hol_x^0(g)=\{I_2\}\subset\SO(2)$ because $g$ is flat.
\end{example}

One may also perturb a Calabi-Yau metric without leaving the realm of K{\"a}hler metrics, that is, to obtain a sequence of metrics with holonomy $\U(n)$ converging to a metric with holonomy $\SU(n)$:
\begin{example}
Let $M$ be any $\mathrm{K}3$ surface and $\omega$ be the K{\"a}hler form of a K{\"a}hler metric on $M$.  Let $\rho\in C^\infty(M,\Lambda^{1,1}T^* M)$ with $[\rho]=0$ in cohomology (the existence of such a $\rho$ follows directly from the fact that $C^\infty(M,\Lambda^{1,1}T^* M)$ is infinite-dimensional while $H^2(M)$ is not).

By Yau's solution to the Calabi conjecture in \cite{yau_ricci_1978}, there exists a unique K{\"a}hler metric $g_1$ on $M$ with Ricci form $\rho$ and such that its K{\"a}hler form is contained in the cohomology class of $\omega$. Moreover, there exists a unique Ricci-flat K{\"a}hler metric $g_\infty$ on $M$ with K{\"a}hler form contained in the cohomology class of $\omega$.

Let $g_k:=\frac{1}{k}g_1+\frac{k-1}{k}g_\infty$ for $k\in\N_{\geq 2}$. Then, the metrics $g_k$ are K{\"a}hler since convex combinations of K{\"a}hler metrics are again K{\"a}hler, and the sequence $(g_k)_{k\in\N}$ converges in $\Met(M)$ to $g_\infty$. Moreover, for all $k\in\N$ the metric $g_k$ is not Ricci-flat (if it were, it would contradict the uniqueness statement from the Calabi conjecture).

Thus, the metrics $g_k$ all have holonomy group contained in $\U(2)$, but not in $\SU(2)$ (see e.\thinspace g.\ \cite{joyce_compact_2000}, Proposition~4.4.2 and Proposition~6.1.1). However, $g_\infty$ is Ricci-flat and hence has its holonomy group contained in $\SU(2)\subset\U(2)$.
\end{example}

As a final example, it shall be noted that at least on incomplete manifolds, one can also obtain quaternion-K{\"a}hler-metrics (with holonomy in $\Sp(m)\Sp(1)$) converging to a hyperk{\"a}hler metric (with holonomy in $\Sp(m)$). This is due to Swann (\cite{swann_hyperkahler_1990}).
\begin{example}
\label{ex:SwannBundle}
Let $N$ be a quaternion-K{\"a}hler manifold of dimension $4m$ with positive scalar curvature. Let $M$ be the Swann bundle over $N$, which is an incomplete manifold fibring over $N$ with fibre $(\mathbb{H}\backslash\{0\})/\mathbb{Z}_2$ that has been constructed in \cite{swann_hyperkahler_1990}, Section~2.1. In Theorem~2.1.7 of \cite{swann_hyperkahler_1990}, a family of Riemannian quaternion-K{\"a}hler metrics $g_k$ with nonzero scalar curvature on $M$ is constructed, as well as a hyperk{\"a}hler metric $g_\infty$, such that $g_k\to g_\infty$ in $\Met(M)$ (the latter statement follows from the explicit descriptions given there). By well-known facts about the scalar curvatures of quaternion-K{\"a}hler and hyperk{\"a}hler metrics (see e.\thinspace g.\ \cite{besse_einstein_2008}, Theorem~14.45) it follows that $[\Hol_x^0(g_\infty)]\leq[\Sp(m+1)]$ is strictly less than all $[\Hol_x^0(g_k)]$ with respect to the order relation on $\CSC(\SO(4m+4))$ from Definition \ref{def:semicont}. Consequently, the map from Theorem \ref{thm:HolonomyLowerSemicont} is not upper semicontinuous at $g_\infty$.
\end{example}
On the other hand, it would be impossible to construct an example as the previous one on a closed manifold: On a compact manifold $M$ of dimension $n$, all Einstein metrics are stationary points of the Einstein-Hilbert functional $S:\Met(M)\to\R$, $g\mapsto\frac{\int_M \mathrm{scal}^g\,\mathrm{dvol}^g}{\left(\int_M\mathrm{dvol}^g\right)^{(n-2)/n}}$. In particular, since $S$ needs to remain constant along any deformation within the space of Einstein metrics, it is impossible to deform Ricci-flat Einstein metrics (such as hyperk{\"a}hler ones) on a compact manifold to non-Ricci-flat Einstein metrics (such as quaternion-K{\"a}hler ones). Contrarily, Theorem~\ref{thm:HolonomyLowerSemicont} holds without a compactness assumption on the manifold.

To conclude this paper, it shall be remarked that one would assume that the conditions on Theorem~\ref{thm:HolonomyLimit} can actually be weakened. The most apparent restriction is the one to connections compatible with positive definite metrics, which makes the theorem, for example, inapplicable in the Lorentzian setting. Moreover, it would be interesting to see whether Theorem~\ref{thm:HolonomyLowerSemicont} also holds for the full holonomy group (which is, by \cite{wilking_compact_1999}, in general not closed in the special orthogonal group).

However, the relaxation of these conditions would require different proofs. It would be interesting to study such generalizations.

\appendix
\section{The compact-open $C^r$-topology}
\label{app:CompactOpen}
In order for this article to be self-contained, we recall here the definition of the compact-open $C^r$-topology (see e.\thinspace g.\ \cite{hirsch_differential_1976}, Chapter~2.1).
\begin{definition}
Let $r\geq 0$, $M$ be a $C^{r+1}$-manifold and $F\to M$ be a vector bundle of class $C^r$. Given any two charts $\mathbf{x}:U\to V,\mathbf{v}:U'\to V'$ of $M$ respectively $F$, a compact $K\subseteq U$, $\epsilon>0$ and $f\in C^r(M,F)$, let

\begin{align*}B(f,\mathbf{x},\mathbf{v},K,\epsilon):=\Big\{&g\in C^r(M,F)\,|\,g(K)\subseteq V,\forall 0\leq k\leq r:\\
&\norm{D^k(\mathbf{v}\circ f\circ\mathbf{x}^{-1})-D^k(\mathbf{v}\circ g\circ\mathbf{x}^{-1})}_{C^0}<\epsilon\Big\}.
\end{align*}
The set of all such $B(f,\mathbf{x},\mathbf{v},K,\epsilon)$ has the basis property and thus defines a topology on $C^r(M,F)$, the \emph{compact-open $C^r$-topology}.

If a sequence of tensors $(T_k)_{k\in\N}\subset C^0(M,F)$ converges in the compact-open $C^0$-topology to a tensor $T\in C^0(M,F)$, then for any compact $K\subseteq M$, the restrictions $T_k|_K$ converge to $T|_K$ uniformly on $K$ (with respect to any set of charts covering $K$).
\end{definition}

\bibliographystyle{amsplain}
\bibliography{bibliography.bib}

\end{document}